\def\@makefntext{\settowidth\leftskip{\textsuperscript{1}\kern 2pt}\parindent 0pt\leavevmode\llap{\@makefnmark\kern 2pt}}
\let\@adminfootnotes\relax
\let\citationorig\citation
\def\citation#1{\citationorig{#1}\@for\@tempa:=#1\do{\@ifundefined{cit@\@tempa}{\global\@namedef{cit@\@tempa}{}}{}}}
\let\bibitemorig\bibitem
\def\bibitem#1{\@ifundefined{cit@#1}{\typeout{LaTeX Warning: Unused bibitem `#1'}}{}\bibitemorig{#1}}
\def\thmlabel#1{\textup{\bfseries\makebox[1em][l]{#1}}}
\renewenvironment{itemize}{\begin{itemorig}[label=\textbullet, noitemsep, topsep=3pt plus 3pt, labelsep=.6em, leftmargin=1.5em]}{\end{itemorig}}
\newenvironment{enumeratei}{\begin{enumorig}[label=\textup{(\roman*)}, noitemsep, topsep=3pt plus 3pt, leftmargin=*, widest=iii]}{\end{enumorig}}
\newenvironment{enumerate1}{\begin{enumorig}[label=\textup{\arabic*.}, noitemsep, topsep=3pt plus 3pt, leftmargin=1.5em]}{\end{enumorig}}
\newenvironment{enumeratethm}{\begin{enumorig}[label=\thmlabel{\Alph*.}, ref=\textup{\Alph*}, noitemsep, topsep=3pt plus 3pt, leftmargin=*]\itshape}{\end{enumorig}}
\newcounter{theorem}
\newenvironment{theorem}{\begin{enumorig}[label=\thmlabel{\Alph*.}, noitemsep, topsep=\thm@preskip, leftmargin=*]\setcounter{enumi}{\thetheorem}\item\itshape}{\end{enumorig}\stepcounter{theorem}}
\let\leq\leqslant
\let\geq\geqslant
\let\setminus\smallsetminus
\let\Theta\varTheta
\let\Omega\varOmega
\begin{document}

\title{Boolean dimension and local dimension}
\author[William~T. Trotter\and Bartosz Walczak]{William~T. Trotter\protect\footnotemark\and Bartosz Walczak\protect\footnotemark}

\footnotetext[1]{School of Mathematics, Georgia Institute of Technology, Atlanta, GA, USA, email:\ \href{mailto:trotter@math.gatech.edu}{trotter@math.gatech.edu}}
\footnotetext[2]{Department of Theoretical Computer Science, Faculty of Mathematics and Computer Science, Jagiellonian University, Kraków, Poland, email:\ \href{mailto:walczak@tcs.uj.edu.pl}{walczak@tcs.uj.edu.pl}\\Bartosz Walczak was partially supported by National Science Center of Poland grant 2015/18/E/ST6/00299.}

\begin{abstract}
Dimension is a standard and well-studied measure of complexity of posets.
Recent research has provided many new upper bounds on the dimension for various structurally restricted classes of posets.
Bounded dimension gives a succinct representation of the poset, admitting constant response time for queries of the form ``is $x<y$?''.
This application motivates looking for stronger notions of dimension, possibly leading to succinct representations for more general classes of posets.
We focus on two: \emph{boolean dimension}, introduced in the 1980s and revisited in recent research, and \emph{local dimension}, a very new one.
We determine precisely which values of dimension/boolean dimension/local dimension imply that the two other parameters are bounded.
\end{abstract}

\maketitle

\section{Introduction}

\subsection*{Dimension}

The \emph{dimension} of a poset $P=(X,{\leq})$ is the minimum number of linear extensions of $\leq$ on $X$ the intersection of which gives $\leq$.
More precisely, a \emph{realizer} of a poset $P=(X,{\leq})$ is a set $\{{\leq_1},\ldots,{\leq_d}\}$ of linear extensions of $\leq$ on $X$ such that
\[x\leq y\iff(x\leq_1y)\wedge\cdots\wedge(x\leq_dy),\quad\text{for any }x,y\in X,\]
and the dimension is the minimum size of a realizer.
The concept of dimension was introduced by Dushnik and Miller \cite{DM41} and has been widely studied since.
There are posets with arbitrarily large dimension: the \emph{standard example} $S_k=(\{a_1,\ldots,a_k,b_1,\ldots,b_k\},{\leq})$, where $a_1,\ldots,a_k$ are minimal elements, $b_1,\ldots,b_k$ are maximal elements, and $a_i<b_j$ if and only if $i\neq j$, has dimension $k$ when $k\geq 2$ \cite{DM41}.
On the other hand, the dimension of a poset is at most the width \cite{Hir51}, and it is at most $\frac{n}{2}$ when $n\geq 4$, where $n$ denotes the number of elements \cite{Hir51}.

The \emph{cover graph} of a poset $P=(X,{\leq})$ is the graph on $X$ with edge set $\{xy\colon x<y$ and there is no $z$ with $x<z<y\}$.
A poset is \emph{planar} if its cover graph has a non-crossing \emph{upward drawing} in the plane, which means that every cover graph edge $xy$ with $x<y$ is drawn as a curve that goes monotonically up from $x$ to $y$.
Planar posets that contain a least element and a greatest element are well known to have dimension at most $2$ \cite{BFR72}.
By contrast, spherical posets (i.e.\ posets with upward non-crossing drawings on a sphere) with least and greatest elements can have arbitrarily large dimension \cite{Tro78}.
Trotter and Moore \cite{TM77} proved that planar posets that contain a least element have dimension at most $3$ (and so do posets whose cover graphs are forests) and asked whether all planar
\begin{wrapfigure}[5]{r}{.2\textwidth}
\centering
\vspace*{-1ex}
\begin{tikzpicture}[xscale=.3,yscale=.32,rotate=25]
  \tikzstyle{every node}=[circle,draw,fill,minimum size=3pt,inner sep=0pt]
  \node (a1) at (0.25,-5.5) {};
  \node (a2) at (0.25,-4.5) {};
  \node (a3) at (0.25,-3.5) {};
  \node (a4) at (0.25,-2.5) {};
  \node (a5) at (0.25,-1.5) {};
  \node (b1) at (-0.25,5.5) {};
  \node (b2) at (-0.25,4.5) {};
  \node (b3) at (-0.25,3.5) {};
  \node (b4) at (-0.25,2.5) {};
  \node (b5) at (-0.25,1.5) {};
  \node (z1) at (-5,-0.25) {};
  \node (z2) at (-4,-0.25) {};
  \node (z3) at (-3,-0.25) {};
  \node (z4) at (-2,-0.25) {};
  \node (w1) at (5,0.25) {};
  \node (w2) at (4,0.25) {};
  \node (w3) at (3,0.25) {};
  \node (w4) at (2,0.25) {};
  \path (z1) edge (a1) edge (b2) edge (z2);
  \path (w1) edge (b1) edge (a2) edge (w2);
  \path (z2) edge (a2) edge (b3) edge (z3);
  \path (w2) edge (b2) edge (a3) edge (w3);
  \path (z3) edge (a3) edge (b4) edge (z4);
  \path (w3) edge (b3) edge (a4) edge (w4);
  \path (z4) edge (a4) edge (b5);
  \path (w4) edge (b4) edge (a5);
\end{tikzpicture}
\end{wrapfigure}
posets have bounded dimension.
The answer is no---Kelly \cite{Kel81} constructed planar posets with arbitrarily large dimension (pictured).
Another property of Kelly's posets is that their cover graphs have path-width and tree-width~$3$.
Recent research brought a plethora of new bounds on dimension for structurally restricted posets.
In particular, dimension is bounded for
\begin{itemize}
\item posets with height $2$ and planar cover graphs \cite{FLT10},
\item posets with bounded height and planar cover graphs \cite{ST14},
\item posets with bounded height and cover graphs of bounded tree-width \cite{JMM+16},
\item posets with bounded height and cover graphs excluding a topological minor \cite{Wal17},
\item posets with bounded height and cover graphs of bounded expansion \cite{JMW16},
\item posets with cover graphs of path-width $2$ \cite{BKY16},
\item posets with cover graphs of tree-width $2$ \cite{JMT+},
\item posets with planar cover graphs excluding two incomparable chains of bounded length \cite{HST+}.
\end{itemize}

\subsection*{Boolean dimension}

The \emph{boolean dimension} of a poset $P=(X,{\leq})$ is the minimum number of linear orders on $X$ a boolean combination of which gives $\leq$.
More precisely, a \emph{boolean realizer} of $P$ is a set $\{{\leq_1},\ldots,{\leq_d}\}$ of linear orders on $X$ for which there is a $d$-ary boolean formula $\phi$ such that
\begin{equation}
\label{eq:boolean-realizer}
x\leq y\iff\phi\bigl((x\leq_1y),\ldots,(x\leq_dy)\bigr)\quad\text{for any }x,y\in X,
\end{equation}
and the boolean dimension is the minimum size of a boolean realizer.
The boolean dimension is at most the dimension, because a realizer is a boolean realizer for the formula $\phi(\alpha_1,\ldots,\alpha_d)=\alpha_1\wedge\cdots\wedge\alpha_d$.
Beware that the relation $\leq$ defined by \eqref{eq:boolean-realizer} from arbitrary linear orders ${\leq_1},\ldots,{\leq_d}$ on $X$ and formula $\phi$ is not necessarily a partial order.

Boolean dimension was first considered by Gambosi, Nešetřil, and Talamo \cite{GNT90} and by Nešetřil and Pudlák \cite{NP89}.
The definition above follows \cite{NP89}.
That in \cite{GNT90} allows only formulas $\phi$ of the form $\phi(\alpha_1,\ldots,\alpha_d)=\alpha_i\wedge\psi(\alpha_1,\ldots,\alpha_{i-1},\alpha_{i+1},\ldots,\alpha_d)$ for some $i$.
The purpose of this restriction is unclear---it guarantees antisymmetry but not transitivity of the relation $\leq$ defined by \eqref{eq:boolean-realizer}.
Under that modified definition, it is proved in \cite{GNT90} that boolean dimension $d$ and dimension $d$ are equivalent for $d\in\{1,2,3\}$ (we redo that proof in section \ref{sec:proofs} with no restriction on $\phi$).
The standard examples $S_k$ with $k\geq 4$ have boolean dimension $4$ \cite{GNT90} (see section \ref{sec:proofs}).
Easy counting shows that there are posets on $n$ elements with boolean dimension $\Theta(\log n)$ \cite{NP89}.
This is optimal---every $n$-element poset has boolean dimension $O(\log n)$ witnessed by a formula of length $O(n^2\log n)$ \cite{NP89}.

Nešetřil and Pudlák \cite{NP89} asked whether boolean dimension is bounded for planar posets.
It was proved already in \cite{GNT90} that posets with height $2$ and planar cover graphs have bounded boolean dimension.
Spherical posets with a least element also have bounded boolean dimension \cite{BF96}, contrary to ordinary dimension.
This and the recent progress on dimension of structurally restricted posets have motivated revisiting boolean dimension in current research.

\subsection*{Local dimension}

A \emph{partial linear extension} of a partial order $\leq$ on $X$ is a linear extension of the restriction of $\leq$ to some subset of $X$.
A \emph{local realizer} of $P$ of \emph{width} $d$ is a set $\{{\leq_1},\ldots,{\leq_t}\}$ of partial linear extensions of $\leq$ such that every element of $X$ occurs in at most $d$ of ${\leq_1},\ldots,{\leq_t}$ and
\begin{equation}
\label{eq:local-realizer}
x\leq y\iff\text{there is no }i\in\{1,\ldots,t\}\text{ with }x>_iy,\quad\text{for any }x,y\in X.
\end{equation}
The \emph{local dimension} of $P$ is the minimum width of a local realizer of $P$.
Thus, instead of the size of a local realizer, we bound the number of times any element of $X$ occurs in it.
A set of linear extensions of $\leq$ is a local realizer if and only if it is a realizer.
In particular, the local dimension is at most the dimension.
For arbitrary partial linear extensions ${\leq_1},\ldots,{\leq_t}$ of $\leq$ on subsets of $X$, the relation $\leq$ defined by \eqref{eq:local-realizer} is not necessarily a partial order---it may fail to be antisymmetric or transitive.
It is antisymmetric, for example, if one of ${\leq_1},\ldots,{\leq_t}$ is a linear extension of $\leq$ on $X$.

The concept of local dimension was proposed very recently by Ueckerdt \cite{Uec} and originates from concepts studied in \cite{BSU,KU16}.
Ueckerdt \cite{Uec} also noticed that the standard examples $S_k$ with $k\geq 3$ have boolean dimension $3$.

\subsection*{Results}

Extending the results on boolean dimension from \cite{GNT90}, for each $d$, we determine whether posets with dimension/boolean dimension/local dimension $d$ have the other two parameters bounded or unbounded.
Here is the full picture:
\begin{enumeratethm}
\item\label{item:A} Boolean dimension\/ $d$ and dimension\/ $d$ are equivalent for\/ $d\in\{1,2,3\}$ \cite{GNT90}.
\item\label{item:B} Local dimension\/ $d$ and dimension\/ $d$ are equivalent for\/ $d\in\{1,2\}$.
\item\label{item:C} The standard examples\/ $S_k$ have boolean dimension\/ $4$ when\/ $k\geq 4$ \cite{GNT90}, local dimension\/ $3$ when\/ $k\geq 3$ \cite{Uec}, and dimension\/ $k$ when\/ $k\geq 2$ \cite{DM41}.
\item\label{item:D} There are posets with boolean dimension\/ $4$ and unbounded local dimension.
\item\label{item:E} Posets with local dimension\/ $3$ have bounded boolean dimension.
\item\label{item:F} There are posets with local dimension\/ $4$ and unbounded boolean dimension.
\end{enumeratethm}
We present proofs of \ref{item:A}--\ref{item:F} in the next section.

Other new results concern boolean dimension and local dimension of structurally restricted posets.
In particular, posets with cover graphs of bounded path-width have bounded boolean dimension \cite{MW} and bounded local dimension \cite{BPS+}, while local dimension is unbounded for posets with cover graphs of tree-width $3$ \cite{BPS+,BGT} and for planar posets \cite{BGT}.
It remains open whether boolean dimension is bounded for posets with cover graphs of bounded tree-width (in particular, tree-width $3$) and for planar posets.
There are $n$-element posets with local dimension $\Theta(\sqrt{n})$ \cite{BPS+}, and conceivably the right bound is $(\frac{1}{2}-o(1))n$.

\section{Proofs}
\label{sec:proofs}

\begin{theorem}
Boolean dimension\/ $d$ and dimension\/ $d$ are equivalent for\/ $d\in\{1,2,3\}$.
\end{theorem}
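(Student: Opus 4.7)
The plan is to show that boolean dimension\/ $\leq d$ implies dimension\/ $\leq d$ for $d\in\{1,2,3\}$; the reverse implication is immediate from the definitions.

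Given a boolean realizer $(L_1,\ldots,L_d,\phi)$ of $P=(X,{\leq})$, let $S:=\phi^{-1}(T)\subseteq\{T,F\}^d$ and, for each pair $(x,y)\in X\times X$, write $f(x,y):=\bigl((x\leq_1y),\ldots,(x\leq_dy)\bigr)$, so that $x\leq y\iff f(x,y)\in S$. Three elementary constraints govern $S$: \textup{(R)} reflexivity forces $(T,\ldots,T)\in S$; \textup{(C)} for $x\neq y$, $f(x,y)$ and $f(y,x)$ are bitwise complements, because each $L_i$ is total; \textup{(A)} for any complementary pair $\{v,\bar v\}$ actually achieved by some $(x,y),(y,x)$ with $x\neq y$, at most one of $v,\bar v$ lies in $S$, by antisymmetry.

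For $d=1$: only $\phi(\alpha)=\alpha$ is consistent with \textup{(R)} and \textup{(A)} in any non-trivial $P$, so ${\leq}=L_1$. For $d=2$: direct enumeration using \textup{(R)}, \textup{(C)}, \textup{(A)} shows ${\leq}$ must be one of $L_1\cap L_2$, $L_1$, $L_2$, or the trivial (identity) order, hence of dimension at most~$2$.

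For $d=3$, I would enumerate admissible $S\subseteq\{T,F\}^3$ using \textup{(R)}, \textup{(C)}, \textup{(A)}, and transitivity. If $\phi$ essentially depends on at most two of the three variables, the $d=2$ analysis applies directly. Otherwise, up to permuting coordinates, $S$ takes one of a small number of forms---principally $\{TTT\}$ (giving $\phi=\alpha_1\wedge\alpha_2\wedge\alpha_3$ and ${\leq}=L_1\cap L_2\cap L_3$), $\{TTT,TFF\}$ (giving $\phi=\alpha_1\wedge(\alpha_2\leftrightarrow\alpha_3)$), and $\{TTT,TTF,TFT\}$ (giving $\phi=\alpha_1\wedge(\alpha_2\vee\alpha_3)$), together with the few symmetric variants admissible under \textup{(A)} and transitivity. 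In each such case one argues that some distinguished $L_i$ (here $L_1$) is already a linear extension of ${\leq}$, and then builds two further linear extensions $M_2,M_3$ of ${\leq}$ out of $L_2,L_3$---by reversing, inside each $L_i$, exactly those pairs on which $L_i$ conflicts with ${\leq}$---so that ${\leq}=L_1\cap M_2\cap M_3$, witnessing dimension at most~$3$.

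The main obstacle is the non-monotone subcase $\phi=\alpha_1\wedge(\alpha_2\leftrightarrow\alpha_3)$, in which neither $L_2$ nor $L_3$ is itself a linear extension of ${\leq}$. The care needed is in verifying that the ``repaired'' orders $M_2,M_3$ are genuine linear extensions of ${\leq}$ and that every ${\leq}$-incomparable pair is reversed by at least one of $L_1,M_2,M_3$; both claims ultimately follow from transitivity of ${\leq}$ combined with the very restricted form of $S$ in this case.
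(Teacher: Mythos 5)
Your setup (the set $S=\phi^{-1}(T)$, the constraints from reflexivity, complementation, and antisymmetry) and your treatment of $d\in\{1,2\}$ match the paper's argument in substance. For $d=3$, however, there is a genuine gap, and it sits exactly where you place your ``main obstacle.'' The case $S=\{TTT,TFF\}$ does not require repairing $L_2$ and $L_3$ at all: here $x$ and $y$ are comparable if and only if the coordinates $2$ and $3$ of $f(x,y)$ agree, so the relation $x\prec y\iff\bigl(x<_2y\bigr)\wedge\bigl(x>_3y\bigr)$ is a transitive orientation of the incomparability graph of $P$ (transitivity is inherited from $L_2$ and the reverse of $L_3$), and a poset whose incomparability graph is a comparability graph has dimension at most $2$. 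This single observation is the engine of the paper's proof: whenever at most one complementary pair $\{\alpha,\overline{\alpha}\}$ is wholly excluded from $S$, the incomparable pairs are exactly those realizing that pair, and $x\prec y\iff f(x,y)=\alpha$ transitively orients the incomparability graph. That disposes of every admissible $S$ with $|S|\geq 3$ (and of $d=2$ entirely), leaving only $S=\{TTT\}$, $S=\{TTT,\beta\}$ with $\beta$ of weight $2$ (a size-$2$ realizer), and $S=\{TTT,\beta\}$ with $\beta$ of weight $1$ (the orientation argument again).

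By contrast, your proposed substitute---building $M_2,M_3$ by ``reversing, inside each $L_i$, exactly those pairs on which $L_i$ conflicts with $\leq$''---is not a construction: reversing a subset of pairs of a linear order generally destroys transitivity (if $x<_2y<_2z$ and only the pair $\{x,z\}$ is reversed, the result contains the cycle $x,y,z,x$), and you give no argument that a consistent linear extension exists. Relatedly, your enumeration of admissible $S$ for $d=3$ is not verified to be exhaustive: ``principally \dots together with the few symmetric variants'' leaves unexamined sets such as $\{TTT,TTF,FTF\}$ or $\{TTT,TTF,TFT,FTT\}$, which are admissible under your constraints \textup{(R)}, \textup{(C)}, \textup{(A)} and are not coordinate permutations of the three forms you list. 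Both defects disappear once you adopt the transitive-orientation lemma, which handles all the large-$S$ cases uniformly without enumerating them.
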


\begin{proof}
We basically repeat the argument given in \cite{GNT90} but avoiding the restriction on functions $\phi$ imposed therein.
Let $P=(X,{\leq})$ be a poset with boolean dimension $d$ and $\{{\leq_1},\ldots,{\leq_d}\}$ be its boolean realizer for a formula $\phi$.
Reflexivity of $\leq$ implies $\phi(1,\ldots,1)=1$.
Without loss of generality, assume $\phi(\alpha)=0$ when $\phi(\alpha)$ is never used by \eqref{eq:boolean-realizer}.
This and antisymmetry of $\leq$ imply $\phi(\alpha)=0$ or $\phi(\overline{\alpha})=0$ for every $\alpha\in\{0,1\}^d$.
In particular, $\phi(\alpha)=1$ for at most half of the tuples $\alpha$.

If $d=1$, then $\phi(1)=1$ and $\phi(0)=0$, so $\{\leq_1\}$ is a realizer of $P$.
This shows that boolean dimension $1$ and dimension $1$ are equivalent.

Now, let $d\in\{2,3\}$.
If $\phi(\alpha)=\phi(\overline{\alpha})=0$ for at most one pair $\alpha,\overline{\alpha}$ (as it is for $d=2$), then the strict partial order $\prec$ on $X$ defined by $x\prec y\iff\bigl((x<_1y),\ldots,(x<_dy)\bigr)=\alpha$ for distinct $x,y\in X$ is a transitive orientation of the incomparability graph of $P$, so $P$ has dimension at most $2$ \cite[Theorem 3.61]{DM41}.
This shows that boolean dimension $2$ and dimension $2$ are equivalent.

For $d=3$, to complete the proof that boolean dimension $3$ and dimension $3$ are equivalent, we consider the three cases (up to symmetry) in which $\phi(\alpha)=1$ for at most two tuples $\alpha$.
\begin{enumerate1}
\item If $\phi(\alpha)=1$ for $\alpha=(1,1,1)$ only, then $\{{\leq_1},{\leq_2},{\leq_3}\}$ is a realizer of $P$.
\item If $\phi(\alpha)=1$ for $\alpha\in\{(1,1,0),(1,1,1)\}$ only, then $\{{\leq_1},{\leq_2}\}$ is a realizer of $P$.
\item\leavevmode\hbox{If $\phi(\alpha)=1$ for $\alpha\in\{(0,0,1),(1,1,1)\}$ only, then} the strict partial order $\prec$ on $X$ defined by $x\prec y\iff\bigl((x<_1y)\wedge(x>_2y)\bigr)$ for distinct $x,y\in X$ is a transitive orientation of the incomparability graph of $P$, so $P$ has dimension at most $2$ \cite[Theorem 3.61]{DM41}.\qedhere
\end{enumerate1}
\end{proof}

\begin{theorem}
Local dimension\/ $d$ and dimension\/ $d$ are equivalent for\/ $d\in\{1,2\}$.
\end{theorem}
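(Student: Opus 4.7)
My plan is to handle $d=1$ and $d=2$ with the same basic observation: if $\{x,y\}$ is an incomparable pair of $P$ and $I_x=\{i:x\in\mathrm{dom}({\leq_i})\}$, then \eqref{eq:local-realizer} forces distinct indices $i,j\in I_x\cap I_y$ with $x>_iy$ and $y>_jx$, so $|I_x\cap I_y|\geq 2$.

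For $d=1$, this immediately gives a contradiction unless $P$ has no incomparable pair at all, so $P$ is a chain and $\dim P=1$.

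For $d=2$, the same observation gives $I_x=I_y$ with $|I_x|=|I_y|=2$ for every incomparable pair $\{x,y\}$. Iterating along paths in the incomparability graph of $P$, I would show that $I_x$ is constant on every connected component $C$ of the incomparability graph with $|C|\geq 2$; call this common pair $\{i_1(C),i_2(C)\}$. Then ${\leq_{i_1(C)}}|_C$ and ${\leq_{i_2(C)}}|_C$ are two linear extensions of $P|_C$ whose intersection is exactly $P|_C$, witnessing $\dim P|_C\leq 2$.

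To assemble a global realizer, I would next observe that distinct components are totally ordered in $P$: any two elements from different components are comparable, and a short transitivity argument using any incomparable witness inside either component shows that this comparison is uniform across each pair of components. Listing the components in this order as $C_1<C_2<\cdots<C_k$ and concatenating the two linear extensions of each $C_j$ (with the obvious singleton order for trivial components) produces two linear extensions of $P$ whose intersection equals $\leq$, so $\dim P\leq 2$.

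The main obstacle is the structural step in the middle: exploiting the fact that width $2$ is rigid enough to force each incomparability component to be governed by the same pair of partial extensions, and that distinct components are totally ordered in $P$, so that the per-component realizers glue into two global linear extensions of $P$.
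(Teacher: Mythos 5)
Your proposal is correct and takes essentially the same route as the paper's proof: the same forcing observation that an incomparable pair must occupy two common partial extensions, the same decomposition into connected components of the incomparability graph, and the same stacking of the per-component realizers along the total order on components. The structural steps you flag as the ``main obstacle'' are exactly the ones the paper also relies on, and they go through as you describe.
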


\begin{proof}
If a poset $P=(X,{\leq})$ has local dimension $1$, then a local realizer of $P$ of width $1$ must consist of a single full linear order on $X$, because antisymmetry of $\leq$ requires that every pair $x,y\in X$ occurs in at least one partial linear extension.

Now, let $P=(X,{\leq})$ be a poset with local dimension $2$, and consider a local realizer of $P$ of width $2$.
If $x,y\in X$ are incomparable in $\leq$, then both occurrences of $x$ and $y$ are in the same two partial linear extensions, where $x<y$ in one and $x>y$ in the other.
Therefore, the subposet of $P$ induced on every connected component $C$ of the incomparability graph of $P$ is witnessed by two partial linear extensions, which restricted to $C$ form a realizer of that subposet.
These realizers stacked according to the order $\leq$ form a realizer of $P$ of size $2$.
\end{proof}

\begin{theorem}
The standard examples\/ $S_k$ have boolean dimension\/ $4$ when\/ $k\geq 4$, local dimension\/ $3$ when\/ $k\geq 3$, and dimension\/ $k$ when\/ $k\geq 2$.
\end{theorem}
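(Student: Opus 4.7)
The plan is to handle the three claims separately, using explicit constructions for the upper bounds and the previous two theorems for the lower bounds.

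For $\dim(S_k)=k$ when $k\geq 2$, I would use the realizer in which $L_i$ lists the $a_j$ with $j\neq i$ in some order, then $b_i$, then $a_i$, then the $b_j$ with $j\neq i$ in some order, for $i=1,\ldots,k$. Each $L_i$ is a linear extension of $\leq$ whose only reversed incomparability of the form $a_j,b_j$ is at $j=i$, so $\{L_1,\ldots,L_k\}$ realizes $\leq$. The lower bound is the classical pigeonhole argument: with fewer than $k$ extensions, two of the $k$ incomparabilities $a_i,b_i$ and $a_{i'},b_{i'}$ would be reversed in a single extension $L$, producing the cycle $a_i>_L b_i>_L a_{i'}>_L b_{i'}>_L a_i$ via the relations $a_i<b_{i'}$ and $a_{i'}<b_i$ in $S_k$.

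For $\mathrm{ldim}(S_k)\leq 3$ when $k\geq 3$, I propose the local realizer consisting of the two full linear extensions $L=a_1<\cdots<a_k<b_1<\cdots<b_k$ and $L'=a_k<\cdots<a_1<b_k<\cdots<b_1$ together with $k$ two-element partial extensions $L_i^\ast=(b_i<a_i)$. Every element appears in exactly three of these: $L$, $L'$, and one $L_i^\ast$. A routine case split verifies \eqref{eq:local-realizer}: the pair $L,L'$ jointly reverses every $a_i,a_j$ and every $b_i,b_j$ incomparability (using opposite index orders), each $L_i^\ast$ reverses the incomparability $a_i,b_i$ in one direction while the opposite direction is already realized in $L$, and no partial extension inverts a genuine relation. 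The lower bound $\mathrm{ldim}(S_k)\geq 3$ is immediate: $\mathrm{ldim}(S_k)\leq 2$ would imply $\dim(S_k)\leq 2$ by the preceding theorem, contradicting $\dim(S_k)=k\geq 3$.

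For $\mathrm{bdim}(S_k)\leq 4$ when $k\geq 4$, I propose the four linear orders $L_1=a_1,\ldots,a_k,b_1,\ldots,b_k$; $L_2=b_1,\ldots,b_k,a_1,\ldots,a_k$; $L_3=a_1,b_1,a_2,b_2,\ldots,a_k,b_k$; and $L_4=a_k,b_k,\ldots,a_1,b_1$. Orders $L_1,L_2$ separate ``$x$ is an $a$ and $y$ is a $b$'' from its reverse; the interleaved $L_3,L_4$ are designed so that, for a mixed pair $x=a_i$, $y=b_j$, the conjunction $(x\leq_3 y)\wedge(x\leq_4 y)$ holds precisely when $i=j$. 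A short case analysis then shows that the set of $4$-tuples arising from pairs with $x\leq y$ is $\{(1,1,1,1),(1,0,1,0),(1,0,0,1)\}$, disjoint from the tuples arising from pairs with $x\not\leq y$; the indicator formula of the former realizes $\leq$. The matching lower bound $\mathrm{bdim}(S_k)\geq 4$ follows from the first theorem of this section combined with $\dim(S_k)\geq 4$. The hardest step is engineering the interleaved pair $L_3,L_4$ so as to separate the diagonal pair $a_i,b_i$ (incomparable) from the off-diagonal pair $a_i,b_j$ with $i\neq j$ (comparable); this is what $L_1,L_2$ alone cannot achieve, and it is precisely what the interleaved gadget is designed to accomplish.
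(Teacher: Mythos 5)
Your proof is correct and takes essentially the same approach as the paper: the local realizer and the dimension argument are the standard ones the paper uses, and your boolean realizer is a minor variant of the paper's, which takes the two block orders $a_1<\cdots<a_k<b_1<\cdots<b_k$ and $a_k<\cdots<a_1<b_k<\cdots<b_1$ together with the two interleavings $b_1<a_1<\cdots<b_k<a_k$ and $b_k<a_k<\cdots<b_1<a_1$ and the simpler formula $\phi(\alpha)=\alpha_1\wedge\alpha_2\wedge(\alpha_3\vee\alpha_4)$. Both constructions rest on the same idea---two oppositely interleaved orders isolate the diagonal pairs $a_i,b_i$---and in both the lower bounds reduce to the preceding two theorems combined with $\dim(S_k)=k$.
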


\begin{proof}
It was observed in \cite{GNT90} that the standard example $S_k$ has boolean dimension $4$ (when $k\geq 4$), witnessed by the formula $\phi(\alpha)=\alpha_1\wedge\alpha_2\wedge(\alpha_3\vee\alpha_4)$ and the following four linear orders:
\begin{alignat*}{8}
&a_1&&<\cdots<a_k&&<b_1&&<\cdots<b_k,\qquad\qquad &&b_1&&<a_1&&<\cdots<b_k&&<a_k,\\
&a_k&&<\cdots<a_1&&<b_k&&<\cdots<b_1,\qquad\qquad &&b_k&&<a_k&&<\cdots<b_1&&<a_1.
\end{alignat*}

Ueckerdt \cite{Uec} observed that $S_k$ has local dimension $3$ (when $k\geq 3$), witnessed by the two linear extensions above on the left and $k$ partial linear extensions each of the form $b_i<a_i$.

Only one pair $a_i,b_i$ can be ordered as $b_i<a_i$ in a single linear extension, so the dimension of $S_k$ is at least $k$.
A realizer of size exactly $k$ can be constructed easily when $k\geq 2$, see \cite{DM41}.
\end{proof}

\begin{theorem}
There are posets with boolean dimension\/ $4$ and unbounded local dimension.
\end{theorem}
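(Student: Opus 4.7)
The plan is to exhibit a family $\{P_n\}_{n \ge 1}$ of posets with boolean dimension at most $4$ but local dimension tending to infinity with $n$. Because the standard examples $S_k$ already attain boolean dimension $4$ while having local dimension only $3$, the new posets must be more intricately tangled than any single $S_k$, yet still admit a boolean realizer of the same form---four linear orders combined by the formula $\alpha_1 \wedge \alpha_2 \wedge (\alpha_3 \vee \alpha_4)$ that witnesses the boolean dimension of $S_k$.

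A natural starting point is to keep this four-variable formula but apply it to a combinatorially richer ground set. I would consider a bipartite poset $P_n$ with $a_{ij}$ minimal and $b_{ij}$ maximal for $(i,j)$ in a two-coordinate index set (say $[n]\times[n]$), and comparability defined by a coordinate-wise condition parallel to the ``$i \ne j$'' rule of $S_k$ (for instance ``$i \ne i'$ and $j \ne j'$,'' or a refinement designed to propagate conflicts between coordinates). Transitivity and antisymmetry are automatic since the poset is bipartite. For the boolean realizer, I would use four linear orders obtained by interleaving $A$-elements and $B$-elements via lexicographic and reverse-lexicographic orderings in each coordinate, and verify that the $S_k$-formula extracts exactly the intended comparabilities; the verification reduces, coordinate by coordinate, to the $S_n$ calculation already used in the proof of Theorem~\ref{item:A}.

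The main obstacle is the lower bound on local dimension. The strategy is an extremal argument: given any local realizer of width $d$, each of the ``diagonal'' incomparable pairs $\{a_{ij}, b_{ij}\}$ must be inverted in some partial extension, and double-counting (each element appearing in at most $d$ extensions) forces a single partial extension to carry many such inversions along a common row or column. The restriction of this overloaded extension to the aligned indices should then be forced to contain a standard-example-like sub-pattern whose size grows with the degree of overload, which cannot fit inside a linear order once the size is large enough---a contradiction unless $d$ grows with $n$. Making this extraction quantitative, so that $d \to \infty$ is genuinely forced rather than merely some constant, is the real combinatorial work; I expect a Ramsey-type argument that uses the two-dimensional grid structure in an essential way, since a one-dimensional version yields only the bound $3$ already achieved by $S_k$.
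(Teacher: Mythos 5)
There is a genuine gap, and it sits exactly where you acknowledge it: the lower bound on local dimension is not merely ``the real combinatorial work'' left for later---it is the entire theorem, and the plan offers no mechanism for it. Worse, the concrete candidate you name cannot work. Take the grid poset with minimal elements $a_{ij}$, maximal elements $b_{i'j'}$, and $a_{ij}<b_{i'j'}$ if and only if $i\neq i'$ and $j\neq j'$. This poset has local dimension at most $4$: take two linear extensions $L,L'$ that both place all minimal elements below all maximal ones but order the $a$'s (and the $b$'s) among themselves in reverse of each other, and for each row $i$ (resp.\ column $j$) a partial linear extension on the antichain $\{a_{ij'}\colon j'\}\cup\{b_{ij'}\colon j'\}$ (resp.\ $\{a_{i'j}\colon i'\}\cup\{b_{i'j}\colon i'\}$) placing all its $b$'s below all its $a$'s. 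Every element lies in exactly four of these orders, every incomparable pair $a_{ij},b_{i'j'}$ shares a row or a column and is reversed there, and every comparable pair meets only in $L$ and $L'$, where it is never reversed. So the coordinate-wise $S_k$ idea collapses already at the construction stage, and the unspecified ``refinement'' would have to do all the work. The double-counting heuristic also forces nothing: a single partial linear extension can reverse arbitrarily many ``diagonal'' pairs simultaneously (the row gadgets above do exactly this), so an overloaded extension is not by itself contradictory; one needs a structural reason why many reversals cannot coexist in one linear order.

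For comparison, the paper's construction is quite different. It uses the incidence poset of $K_n$: minimal elements $v_1,\dots,v_n$, maximal elements the edges $v_iv_j$, with $v_i,v_j<v_iv_j$ the only comparabilities. Boolean dimension at most $4$ is witnessed by the formula $(\alpha_1\wedge\alpha_2)\vee(\alpha_3\wedge\alpha_4)$ and four explicit linear orders. For the lower bound, given a local realizer of width $d$, each triple $i<j<k$ is colored by the pair $(p,q)$ of occurrence indices such that the reversal $v_iv_k<v_j$ happens in a partial linear extension containing the $p$th occurrence of $v_j$ and the $q$th occurrence of $v_iv_k$; Ramsey's theorem yields a quadruple $i<j<k<\ell$ with all four triples the same color, which forces five occurrences into one partial linear extension containing the cycle $v_j<v_jv_\ell<v_k<v_iv_k<v_j$. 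Note the role of sparsity here: each maximal element covers only two minimal ones, which is what lets the genuine comparabilities $v_j<v_jv_\ell$ and $v_k<v_iv_k$ close the cycle inside a single gadget. Your dense grid poset has no analogous obstruction, consistent with its local dimension being bounded.
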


\begin{proof}
Another well-known construction of posets with arbitrarily large dimension involves incidence posets of complete graphs: $P_n=(V\cup E,{\leq})$, where $V=\{v_1,\ldots,v_n\}$ are the minimal elements, $E=\{v_1v_2,v_1v_3,\ldots,v_{n-1}v_n\}$ are the maximal elements, and the only comparable pairs are $v_i<v_iv_j$ and $v_j<v_iv_j$ for $i\neq j$.
The dimension of $P_n$ is at least $\log_2\log_2n$ \cite[Theorem 4.22]{DM41}.
The boolean dimension of $P_n$ is at most $4$, witnessed by the formula $\phi(\alpha)=(\alpha_1\wedge\alpha_2)\vee(\alpha_3\wedge\alpha_4)$ and the following four linear orders:
\begin{alignat*}{3}
A_1&<\cdots<A_n&,\enspace &\text{where each }A_i &&\text{ has form}\enspace v_i<v_iv_{i+1}<\cdots<v_iv_n,\\
B_n&<\cdots<B_1&,\enspace &\text{where each }B_i &&\text{ has form}\enspace v_i<v_iv_n<\cdots<v_iv_{i+1},\\
C_1&<\cdots<C_n&,\enspace &\text{where each }C_i &&\text{ has form}\enspace v_i<v_1v_i<\cdots<v_{i-1}v_i,\\
D_n&<\cdots<D_1&,\enspace &\text{where each }D_i &&\text{ has form}\enspace v_i<v_{i-1}v_i<\cdots<v_1v_i.
\end{alignat*}
The local dimension of $P_n$ is unbounded as $n\to\infty$.
For suppose $P_n$ has a local realizer of width $d$.
Enumerate the occurrences of each element of $V\cup E$ in the local realizer from $1$ to (at most)~$d$.
Each triple $v_iv_jv_k$ ($i<j<k$) can be assigned a color $(p,q)$ so that $v_iv_k<v_j$ in a partial linear extension containing the $p$th occurrence of $v_j$ and the $q$th occurrence of $v_iv_k$.
By Ramsey's theorem, if $n$ is large enough compared to $d$, then there is a quadruple $v_iv_jv_kv_\ell$ ($i<j<k<\ell$) with all four triples of the same color $(p,q)$.
It follows that the $p$th occurrences of $v_j$ and $v_k$ and the $q$th occurrences of $v_iv_\ell$, $v_iv_k$, and $v_jv_\ell$ are all in the same partial linear extension, which therefore contains a cycle $v_j<v_jv_\ell<v_k<v_iv_k<v_j$, a contradiction.
\end{proof}

\begin{theorem}
Posets with local dimension\/ $3$ have bounded boolean dimension.
\end{theorem}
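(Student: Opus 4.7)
The plan is to build, from any local realizer of $P=(X,\leq)$ of width $3$, a boolean realizer of size bounded by an absolute constant. Let $\{\leq_1,\dots,\leq_t\}$ be the realizer and write $L(x)=\{k:x\in\leq_k\}$, so $|L(x)|\leq 3$. Two basic facts drive the construction: antisymmetry of $\leq$ forces $L(x)\cap L(y)\neq\emptyset$ whenever $x\neq y$, and $x\leq y$ holds in $P$ if and only if $x\leq_k y$ inside $\leq_k$ for every $k\in L(x)\cap L(y)$. After padding the realizer with one-element dummy extensions I may assume $|L(x)|=3$ for every $x$. Fixing a total order $\pi$ on $\{1,\dots,t\}$ and enumerating $L(x)=\{l_1(x),l_2(x),l_3(x)\}$ in increasing $\pi$-order, with $p_i(x)$ the position of $x$ in $\leq_{l_i(x)}$, the relation $x\leq y$ becomes the conjunction over the nine slot pairs $(i,j)\in\{1,2,3\}^2$ of the implication ``if $l_i(x)=l_j(y)$ then $p_i(x)\leq p_j(y)$''. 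A fixed-size boolean formula can combine these nine decisions, provided each one is computable from a constant number of linear-order queries on $X$.

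The three diagonal pairs $(i,i)$ are handled by six linear orders: for each $i\in\{1,2,3\}$, sort $X$ lexicographically by $(\pi(l_i(\cdot)),p_i(\cdot))$ and, in parallel, by $(\pi(l_i(\cdot)),-p_i(\cdot))$. Jointly these detect the equality $l_i(x)=l_i(y)$ and, if it holds, the sign of $p_i(x)-p_i(y)$, which is exactly the diagonal-slot information needed by the formula. This accounts for every shared extension between $x$ and $y$ that happens to lie in the same slot at both endpoints.

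The main obstacle is the off-diagonal slot pairs $(i,j)$ with $i\neq j$: the natural cross-slot comparator, sorting $x$ by $(\pi(l_i(x)),p_i(x))$ and $y$ by $(\pi(l_j(y)),p_j(y))$, gives a binary relation on $X$ that is not transitive and hence not a valid linear order. My strategy is a \emph{slot-aligning} step: look for an auxiliary coloring $c\colon\{1,\dots,t\}\to[C]$ with $C=O(1)$ such that, after re-indexing each element's occurrences by the color of the hosting extension, every shared extension $k\in L(x)\cap L(y)$ sits in the same slot at both endpoints; once slots are aligned, the diagonal construction already suffices. A single proper coloring with $C=3$ need not exist---the auxiliary graph on the extensions, with an edge for every co-occurrence in some $L(x)$, can have unbounded chromatic number---so I would use a constant-size family of colorings and combine, via disjunction, the boolean certificates they produce, exploiting the fact that only three extensions can ever lie in $L(x)\cap L(y)$ for any pair $x,y$. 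Establishing the existence of such a family, presumably through a Ramsey-type or probabilistic argument that leverages $|L(x)|\leq 3$ and the forced intersection $L(x)\cap L(y)\neq\emptyset$, is where I expect the technical core of the proof to lie.
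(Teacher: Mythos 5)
Your setup is sound and matches the paper's: the reduction of $x\leq y$ to ``$x\leq_k y$ in every shared gadget $k$'', the padding to exactly three occurrences per element, and the six linear orders handling the diagonal slot pairs are all essentially the paper's first step (the paper realizes the same effect by stacking the blocks of a partition in one order and in reverse, under a formula $\alpha^\star\wedge(\alpha_1\vee\alpha_1')\wedge\cdots$). But the off-diagonal case, which you correctly identify as the crux, is left entirely open: ``a constant-size family of colorings \ldots presumably through a Ramsey-type or probabilistic argument'' is not an argument, and the specific route you propose---coloring the \emph{gadgets} so as to align slots---looks unlikely to close. To align slots you need each coloring to be injective on $L(x)\cup L(y)$ for the relevant pairs; asking a constant-size family of constant-range colorings to make prescribed $6$-element sets rainbow is a perfect-hash-family condition, which in general forces the family to grow like $\log t$. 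Moreover, whenever a coloring fails to be injective on some $L(z)$, the color-indexed linear orders are no longer well defined at $z$, and a plain disjunction of certificates cannot tell the formula to ignore the resulting garbage comparisons.

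The paper closes this gap by coloring the \emph{elements of $X$}, not the gadgets. It defines a conflict graph $G$ on $X$ whose edges are exactly the off-diagonal conflicts $x^p>y^q$ ($p\neq q$) and observes that $G$ is a subgraph of the incomparability graph of $P$. A lemma of McGuinness then says that if $\chi(G)$ were large, some edge $uv$ would have the interval $X_{uv}=\{x\colon u<^\star x<^\star v\}$ (taken along a fixed linear extension $\leq^\star$) inducing a subgraph of large chromatic number; splitting $X_{uv}$ into $X_u$ and $X_v$ and classifying each $x\in X_u$ by which pairs of occurrences witness its incomparability with $u$ gives a proper $9$-coloring of $G[X_u]$---here the bound $3$ on the number of occurrences is used essentially---so $\chi(G)\leq 38$. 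With a proper $38$-coloring $c$ in hand, for each slot pair $p<q$ and each pair of distinct colors $a,b$ one restricts every gadget to $\{x^p\colon c(x)=a\}\cup\{y^q\colon c(y)=b\}$; since $a\neq b$ each element appears at most once, these restrictions form valid block partitions, and every off-diagonal conflict is captured by one of them. Some argument of this kind---exploiting that the conflict graph lives inside the incomparability graph and has bounded chromatic number---is the missing ingredient in your proposal.
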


\begin{proof}
Let $P=(X,{\leq})$ be a poset with a local realizer of width $3$ consisting of partial linear extensions that we call \emph{gadgets}.
We construct a boolean realizer $\{{\leq^\star},{\leq_1},{\leq_1'},\ldots,{\leq_d},{\leq_{\smash[t]{d}}'}\}$ for a formula of the form $\alpha^\star\wedge(\alpha_1\vee\alpha_1')\wedge\cdots\wedge(\alpha_d\vee\alpha_{\smash[t]{d}}')$.
The order $\leq^\star$ is an arbitrary linear extension of $\leq$ on $X$.
Each pair of orders ${\leq_i},{\leq_i'}$ is defined by $X_1<_i\cdots<_iX_t$ and $X_t<_i'\cdots<_i'X_1$, where $\{X_1,\ldots,X_t\}$ is some partition of $X$ into \emph{blocks} such that every block $X_j$ is completely ordered by some gadget and that order is inherited by $\leq_i$ and $\leq_i'$.
Then, we have $x<y$ for the relation $\leq$ defined by \eqref{eq:boolean-realizer} if and only if $x<^\star y$ and $x<y$ in every block containing both $x$ and $y$.
It remains to construct a bounded number of partitions of $X$ into blocks so that for any $x,y\in X$, if $x<^\star y$ and $x>y$ in some gadget, then $x>y$ in some block in at least one of the partitions.

Without loss of generality, assume that each element $x\in X$ has exactly $3$ occurrences in the gadgets---enumerate them as $x^1,x^2,x^3$ according to a fixed order of the gadgets.
For each $p\in\{1,2,3\}$, form a partition of $X$ by restricting every gadget to elements of the form $x^p$.
These three partitions witness all comparabilities of the form $x^p>y^p$ within gadgets.

Now, let $G$ be a graph on $X$ where $xy$ (with $x<^\star y$) is an edge if and only if $x^p>y^q$ ($p\neq q$) in some gadget.
Thus $G$ is a subgraph of the incomparability graph of $P$.
Suppose $\chi(G)>38$.
It follows that $G$ has an edge $uv$ such that $\chi(G[X_{uv}])\geq 19$, where $X_{uv}=\{x\in X\colon u<^\star x<^\star v\}$ \cite[Lemma 2.1]{McG96}.
Let $X_u=\{x\in X_{uv}\colon u\not\leq x\}$ and $X_v=\{x\in X_{uv}\colon x\not\leq v\}$.
Thus $X_{uv}=X_u\cup X_v$, as $u\not\leq v$.
Let a color of $x\in X_u$ be a quadruple $(p,q,r,s)$ with $p<q$ and $r<s$, where either $x^p<u^q$ and $x^r>u^s$ or $x^p>u^q$ and $x^r<u^s$ in some gadgets.
There are $9$ possible colors (quadruples).
The coloring of $G[X_u]$ thus obtained is proper---whenever $x,y\in X_u$ have the same color, $x^p$ and $y^p$ are in the same gadget, as well as $x^r$ and $y^r$ are in the same gadget; this contradicts the fact that the edge $xy$ of $G$ is witnessed by some $x^i$ and $y^j$ with $i\neq j$ occurring in the same gadget (this is where we use the bound $3$ on the number of occurrences).
Thus $\chi(G[X_u])\leq 9$, and similarly $\chi(G[X_v])\leq 9$, which yields $\chi(G[X_{uv}])\leq 18$.
This contradiction shows $\chi(G)\leq 38$.

Let $c$ be a proper $38$-coloring of $G$.
For $1\leq p<q\leq 3$ and any distinct colors $a,b$, form a partition of $X$ by restricting every gadget to elements of the form $x^p$ with $c(x)=a$ and $y^q$ with $c(y)=b$ (adding singletons if necessary to obtain a full partition of $X$).
The $4218$ partitions thus obtained have the desired property.
The resulting boolean realizer of $P$ has size $8443$.
\end{proof}

\begin{theorem}
There are posets with local dimension\/ $4$ and unbounded boolean dimension.
\end{theorem}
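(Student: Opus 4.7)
The plan is to combine an explicit construction with a counting argument. First, I would construct, for each sufficiently large $n$, a family $\mathcal{F}_n$ of $\Theta(n)$-element posets such that every member has local dimension at most $4$, yet $|\mathcal{F}_n|$ grows faster than $(N!)^d\cdot 2^{2^d}$ for any fixed $d$ (where $N=\Theta(n)$). Second, I would invoke the standard Nešetřil--Pudlák counting bound: an $N$-element labeled poset of boolean dimension at most $d$ is fully described by $d$ linear orders on the ground set and a $d$-ary boolean formula, giving at most $(N!)^d\cdot 2^{2^d}=2^{O_d(N\log N)}$ such posets. Combining the two, for any fixed $d$ and large $n$, not every poset in $\mathcal{F}_n$ can have boolean dimension at most $d$, so boolean dimensions within $\bigcup_n\mathcal{F}_n$ are unbounded.

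For the construction of $\mathcal{F}_n$, a natural candidate is a parametrised family of bipartite posets $Q=(A\cup B,{\leq})$ with $|A|=|B|=n$, where the parameter controls which pairs $a_i<b_j$ hold. A local realizer of width $4$ for such $Q$ would consist of two full linear extensions in the style of Ueckerdt's realizer for the standard example (both placing $A$ below $B$ with opposite internal orderings, thereby collectively witnessing the comparabilities and all incomparabilities within a single level), together with two additional partial linear extensions that witness the $a\parallel b$ incomparabilities between the levels. Each additional partial extension takes the form of a chain $b_{j_1}<a_{i_1}<b_{j_2}<a_{i_2}<\cdots$ that bundles a monotone ``staircase'' of incomparability witnesses into one gadget. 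The family $\mathcal{F}_n$ would be chosen so that the incomparability graph of every member decomposes into two such staircases in a way that keeps every element in at most two additional extensions, and hence in at most four partial linear extensions in total.

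The main obstacle is reconciling the structural restriction (a staircase-cover decomposition in which every vertex is used at most twice, needed to certify local dimension $\leq 4$) with the combinatorial richness (at least $2^{\omega(n\log n)}$ distinct labeled members, needed to defeat the counting bound). Finding a parametrisation satisfying both demands simultaneously is the crux of the argument; the remaining ingredients, namely explicitly verifying the width-$4$ local realizer and carrying out the counting deduction, are then essentially mechanical. An alternative route, should the bipartite approach fall short of the required cardinality, is to allow taller posets in which each level is handled by its own pair of ``reverse'' full extensions and additional staircase-type gadgets straddle consecutive levels, enlarging the parameter space while preserving the bound of four occurrences per element.
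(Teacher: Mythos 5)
Your plan has a fatal flaw at its core: the counting framework cannot possibly yield \emph{unbounded} boolean dimension, because the very same counting argument that bounds the number of posets with boolean dimension at most $d$ also bounds the number of posets with local dimension at most $4$. A local realizer of width $4$ on a labeled $N$-element set is a collection of sequences of distinct elements with total length at most $4N$; there are at most $\sum_{L\leq 4N}N^L2^L\leq(2N)^{4N+1}=2^{4N\log_2N+O(N)}$ such collections, and each determines its poset uniquely via \eqref{eq:local-realizer}. Hence \emph{every} family $\mathcal{F}_n$ of $N$-element posets of local dimension at most $4$ satisfies $|\mathcal{F}_n|\leq 2^{4N\log_2N+O(N)}$, whereas to defeat boolean dimension $d$ by counting you need $|\mathcal{F}_n|>(N!)^d\cdot 2^{2^d}=2^{dN\log_2N-O(dN)}$. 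For $d\geq 5$ and large $N$ this is impossible. So the ``crux'' you identify (reconciling the width-$4$ structure with combinatorial richness $2^{\omega(N\log N)}$) is not merely hard --- it is unachievable, and no choice of $\mathcal{F}_n$ can make the argument go through. (This is a general phenomenon: counting can separate a parameter from boolean dimension only if the class in question contains $(N!)^{\omega(1)}$ posets, which bounded local dimension rules out.)

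The paper's proof is necessarily structural rather than enumerative. It recursively builds (in the style of the Descartes and Kelly--Kelly triangle-free constructions) acyclic digraphs $G=(V,E)$ with $\chi(G)>k$ together with a poset $P$ on the edge set $E$ admitting a local realizer of width $4$: two global linear extensions $\leq_A,\leq_B$ plus, for each vertex $v$, one gadget of the form $E(v,V)<_vE(V,v)$ (each edge $uv$ lies in exactly the two gadgets $\leq_u,\leq_v$, hence width $4$). The lower bound on boolean dimension is then a coloring argument, not a count: given a boolean realizer $\{\leq_1,\ldots,\leq_d\}$, the map sending each directed path $uvw$ to the tuple $\bigl((uv<_1vw),\ldots,(uv<_dvw)\bigr)$ is shown to be a proper $2^d$-coloring of the second arc digraph $G''$ of $G$, using that $uv\not\leq vw$ (so $\phi$ vanishes on that tuple) while $uv<wx$ for every path $uvwx$; since $\chi(G'')\geq\log_2\log_2\chi(G)$, taking $k=2^{2^{2^d}}$ gives the contradiction. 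If you want to salvage your write-up, you would need to replace the counting step with a lower-bound mechanism of this kind that exploits transitivity of the orders in a putative boolean realizer.
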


\begin{proof}
When $(V,E)$ is an acyclic digraph, $v\in V$, and $X,Y\subseteq V$, let $E(X,v)=\{xv\in E\colon x\in X\}$, $E(v,Y)=\{vy\in E\colon y\in Y\}$, and $E(X,Y)=\{xy\in E\colon x\in X$ and $y\in Y\}$ ($xy$ denotes a directed edge from $x$ to $y$).
For every $k\geq 1$, we construct an acyclic digraph $G=(V,E)$ with $\chi(G)>k$, a poset $P=(E,{\leq})$, and a local realizer $\{{\leq_A},{\leq_B}\}\cup\{{\leq_v}\colon v\in V\}$ of $P$ of width $4$, where
\begin{enumeratei}
\item\label{item:i} $\leq_A$ is a linear extension of $\leq$ on $E$ such that $E(V,v)<_AE(v,V)$ for every $v\in V$,
\item\label{item:ii} $\leq_B$ is a linear extension of $\leq$ on $E$ such that $E(V,v)<_BE(v,V)$ and $E(v,V)$ occurs as a contiguous block in $\leq_B$ for every $v\in V$,
\item\label{item:iii} $\leq_v$ is a \emph{gadget}---a partial linear extension of the form $E(v,V)<_vE(V,v)$ for every $v\in V$.
\end{enumeratei}

The construction is an adaptation of the well-known construction of triangle-free graphs with arbitrarily large chromatic number from \cite{Des54,KK54}.
For $k=1$, let $V=\{u,v\}$, $E=\{uv\}$, and ${\leq_A},{\leq_B},{\leq_u},{\leq_v}$ be trivial orders on $E$.
Now, suppose that $k\geq 2$ and the construction can be performed for $k-1$.
Let $r$ be the number of vertices in that construction, $s=k(r-1)+1$, and $n=\binom{s}{r}$.
For $1\leq i\leq n$, let $G^i=(V^i,E^i)$, $P^i=(E^i,{\leq^i})$, and $\{{\leq^i_A},{\leq^i_B}\}\cup\{{\leq^i_v}\colon v\in V^i\}$ be separate instances of the construction for $k-1$.
Let $X=\{x_1,\ldots,x_s\}$ be yet a separate set of $s$ vertices.
Let $X^1,\ldots,X^n$ be the $r$-element subsets of $X$.
Let $V=X\cup V^1\cup\cdots\cup V^n$ and $E=\bigcup_{i=1}^n\bigl(\{x^i_1v^i_1,\ldots,x^i_rv^i_r\}\cup E^i\bigr)$, where
\begin{itemize}
\item $x^i_1,\ldots,x^i_r$ are the vertices in $X^i$ in the same order as in the sequence $x_1,\ldots,x_s$,
\item $v^i_1,\ldots,v^i_r$ are the vertices in $V^i$ ordered so that $E^i(v^i_1,V^i)<^i_B\cdots<^i_BE^i(v^i_r,V^i)$.
\end{itemize}
Let $G=(V,E)$.
Clearly, $G$ is an acyclic digraph.
The assumption that $\chi(G^i)>k-1$ for all~$i$ implies $\chi(G)>k$ \cite{Des54,KK54}.
Indeed, in any proper $k$-coloring of $G$, at least one of the sets $X^i$ would be monochromatic, which would yield $\chi(G^i)\leq k-1$, a contradiction.
For $1\leq j\leq s$, let $N_j=\{v\in V\colon x_jv\in E\}$.
Let $\leq_A$ and $\leq_B$ be linear orders on $V$ such that
\begin{itemize}
\item $E(X^1,V^1)<_AE^1<_A\cdots<_AE(X^n,V^n)<_AE^n$ and the restriction of $\leq_A$ to each $E^i$ is $\leq^i_A$,
\item $E(x_1,V)<_BE(N_1,V)<_B\cdots<_BE(x_s,V)<_BE(N_s,V)$.
\end{itemize}
The latter property implies that the restriction of $\leq_B$ to each $E^i$ is $\leq^i_B$.
Finally, for every $x\in X$, let $\leq_x$ be a new gadget on $E(x,V)$, and for $v\in V^i$ and $1\leq i\leq n$, let $\leq_v$ be $\leq^i_v$ with $E(X,v)$ (which is just one edge) added on top.
This guarantees properties \ref{item:i}--\ref{item:iii}.
Let $\leq$ be the relation on $E$ defined from $\{{\leq_A},{\leq_B}\}\cup\{{\leq_v}\colon v\in V\}$ by \eqref{eq:local-realizer}.
It follows that the restriction of $\leq$ to each $E^i$ is $\leq^i$.
It remains to show that the relation $\leq$ is a partial order, so that $P=(E,{\leq})$ is a poset and $\{{\leq_A},{\leq_B}\}\cup\{{\leq_v}\colon v\in V\}$ is its local realizer.

Reflexivity and antisymmetry of $\leq$ are clear.
For transitivity, suppose $e,f,g\in E$, $e<f$, and $f<g$, but $e\not\leq g$.
The assumption that $e<f$ and $f<g$ implies $e<_Af<_Ag$ and $e<_Bf<_Bg$.
Since $e\not\leq g$, the edges $e$ and $g$ must occur as $e>g$ in some gadget.
We consider four cases.
\begin{enumerate1}
\item If $e,g\in E^i$ for some $i$, then the definition of $\leq_A$ implies $f\in E^i$, so $e<^if<^ig$.
This and the assumption that $e\not\leq g$ contradict the fact that $\leq^i$ is the restriction of $\leq$ to $E^i$.
\item If $e,g\in E(x,V)$ for some $x\in X$, then the definition of $\leq_B$ implies $f\in E(x,V)$.
This yields $e<_xf<_xg$, and the only gadget containing both $e$ and $g$ fails to witness $e\not\leq g$.
\item If $e=x_jv$ and $g=uv$ for some $x_j\in X$ and $u,v\in V\setminus X$, then $u,v\in V^i$ for some $i$.
The definition of $E$ implies that there is an edge $x_{j'}u\in E$, where $j'<j$, and therefore $g\in E(N_{j'},V)<_BE(x_j,V)\ni e$, a contradiction.
\item If $e=x_jv$ and $g=vw$ for some $x_j\in X$ and $v,w\in V\setminus X$, then $v,w\in V^i$ for some~$i$.
The~definitions of $\leq_A$ and $\leq_B$ imply
\begin{equation*}
f\in\bigl(E(X^i,V^i)\cup E^i\bigr)\cap\bigl(E(x_j,V)\cup E(N_j,V)\bigr)=\{x_jv\}\cup E(v,V^i).
\end{equation*}
This yields $e<_vf<_vg$, and the only gadget containing both $e$ and $g$ fails to witness $e\not\leq g$.
\end{enumerate1}
This shows that $\leq$ is transitive, thus completing the proof of correctness of the construction.

Let $k=2^{2^{2^d}}$.
We show that the poset $P$ resulting from the construction above has boolean dimension greater than $d$.
For suppose $\{{\leq_1},\ldots,{\leq_d}\}$ is a boolean realizer of $P$ for a formula~$\phi$.
Let $G'=(E,A)$ be the \emph{arc digraph} of $G$, and let $G''=(A,B)$ be the \emph{arc digraph} of $G'$.
That is, $A=\{uvw\colon uv,vw\in E\}$ and $B=\{uvwx\colon uvw,vwx\in A\}=\{uvwx\colon uv,vw,wx\in E\}$.
It follows that $\chi(G')\geq\log_2\chi(G)$ and $\chi(G'')\geq\log_2\chi(G')$ \cite[Theorem 9]{HE72}, and thus $\chi(G'')>2^d$.
For $uvw\in A$, let $\alpha(uvw)=\bigl((uv<_1vw),\ldots,(uv<_dvw)\bigr)\in\{0,1\}^d$; the fact that $uv>_vvw$ implies $uv\not\leq vw$ and thus $\phi(\alpha(uvw))=0$.
Let $uvwx\in B$.
We have $uv<_Avw<_Awx$ and $uv<_Bvw<_Bwx$, which implies $uv<wx$, because no gadget contains both $uv$ and $wx$.
If $\alpha(uvw)=\alpha(vwx)=\alpha$, then transitivity of ${\leq_1},\ldots,{\leq_d}$ implies $\bigl((uv<_1wx),\ldots,(uv<_dwx)\bigr)=\alpha$.
This, $\phi(\alpha)=0$, and $uv<wx$ result in a contradiction.
Therefore, $\alpha\colon A\to\{0,1\}^d$ is a proper $2^d$-coloring of $G''$.
This contradicts the fact that $\chi(G'')>2^d$.
\end{proof}

\end{document}